\theoremstyle{plain}
\newtheorem{theorem}{Theorem}[section]
\newtheorem{lemma}[theorem]{Lemma}
\theoremstyle{definition}
\newtheorem{definition}[theorem]{Definition}
\newcommand{\R}{\mathbb{R}}
\newcommand{\N}{\mathbb{N}}
\newcommand{\E}{\mathbb{E}}
\newcommand{\Deg}{\operatorname{Deg}}
\newcommand{\Ric}{\operatorname{Ric}}
\newcommand{\Hidden}[1]{}
\begin{document}

\title{Ollivier curvature, betweenness centrality and average distance}
\author{

Florentin M\"unch\footnote{Max Planck Institute for Mathematics in the Sciences Leipzig, muench@mis.mpg.de}
}
\date{\today}
\maketitle

\begin{abstract}  
We give a new upper bound for the average graph distance in terms of the average Ollivier curvature.
 Here, the average Ollivier curvature is weighted with the edge betweenness centrality.
Moreover, we prove that equality is attained precisely for the reflective graphs which have been classified as Cartesian products of cocktail party graphs, Johnson graphs, halved cubes, Schläfli graphs, and Gosset graphs.
\end{abstract}


\section{Introduction}

A major question in network analysis is how to evaluate the importance of a given edge or vertex. Global measures of importance are called centrality, and local measures of importance can be described via several discrete curvature notions. For example, the clustering coefficient can be related to Ollivier curvature \cite{jost2014ollivier}, and local connectivity is related to Bakry Emery curvature \cite[Section~1.3]{cushing2016bakry}. Moreover, negative curvature indicates a local bottleneck as discussed in \cite{topping2021understanding}.

In contrast to curvature, most centrality notions of an edge or vertex take the whole graph into account. Particularly, the betweenness centrality counts the geodesics passing through a given edge or vertex.

For an overview about different centrality notions and applications, see \cite{borgatti2005centrality,koschutzki2005centrality,
freeman1978centrality}, and for an overview about different curvature notions, see
\cite{ollivier2009ricci,lin2010ricci,forman2003bochner,
jost2021characterizations}.
For a comparison between centrality and curvature in network analysis, see \cite{sreejith2016forman,ni2015ricci,gao2019measuring}.
Recently, there have also been introduced several non-local curvature notions which might be interpreted as some centrality index \cite{steinerberger2022curvature,devriendt2022discrete}.

While curvature and centrality both measure the importance of a vertex or edge, just on a different spatial scale, there seems to be no mathematical connection in the literature so far.

In this paper, we give a connection between edge betweenness centrality and Ollivier Ricci curvature. 
We are particularly interested in the weighted average of the Ollivier Ricci curvature, where the weight is the edge betweenness centrality. Intuitively, this means that important edges contribute more to the average than less important edges.

The main goal of the paper is to give an upper bound of the size of the network in terms of the average Ollivier Ricci curvature.
Moreover for aficionados, we characterize the graphs for which the upper bound is attained.

We now present our main theorem.
\begin{theorem}\label{thm:Main}
Let $G=(V,E)$ be a finite connected graph. Then,
\[\E_g \Ric \cdot \E d \leq \E \Deg\]
where $\E d$ is the average distance, $\E \Deg$ is the average vertex degree, and $\E_g \Ric$ the average Ollivier Ricci curvature weighted with the edge betweenness centrality $g$. 

Moreover, t.f.a.e.:
\begin{enumerate}[(i)]
\item $\E_g \Ric \cdot \E d = \E \Deg$,
\item 
The graph $G$ is a Cartesian product of graphs from the following list:
\begin{itemize}
\item Cocktail party graphs,
\item Johnson graphs,
\item Halved cubes,
\item Schläfli graph,
\item Gosset graph.
\end{itemize}
\end{enumerate}
\end{theorem}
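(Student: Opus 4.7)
My plan is to combine Kantorovich--Rubinstein duality along geodesics with an integration against the betweenness centrality, and then to read off the equality case from the known rigidity classification of Bonnet--Myers extremal graphs. First I fix a pair $s,t$ and use the $1$-Lipschitz test function $f_t=d(\cdot,t)$; on every edge $\{x,y\}$ of a geodesic from $s$ to $t$ the Kantorovich dual formula for $W_1(\mu_x,\mu_y)$ yields
\[
  \Ric(\{x,y\}) \;\leq\; \Delta f_t(y) - \Delta f_t(x),
\]
where $\Delta$ denotes the normalised graph Laplacian. Since $f_t$ decreases by exactly $1$ at each step of the geodesic, summing telescopes to
\[
  \sum_{e \in \gamma_{st}} \Ric(e) \;\leq\; \Delta f_t(t) - \Delta f_t(s),
\]
a bound that depends only on the endpoints. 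The same argument applied to $-d(\cdot,s)$ and averaged with the first gives a symmetric version of this pointwise estimate.

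Next I integrate against the edge betweenness. Using $g(e)=\sum_{s,t}\sigma_{st}(e)/\sigma_{st}$ and the interchange
\[
  \sum_e g(e)\Ric(e) \;=\; \sum_{s,t}\frac{1}{\sigma_{st}}\sum_{\gamma:s\to t}\sum_{e\in\gamma}\Ric(e),
\]
the pointwise bound becomes a double sum of $\Delta f_t(t)-\Delta f_t(s)$ over $(s,t)$. The identity $\sum_t \Delta f_t(x) = \Delta S(x)$, with $S(x)=\sum_t d(x,t)$ the total distance from $x$, allows me to swap the order of summation, and a discrete integration by parts (pairing directed edges) converts the resulting expression into a quantity controlled by the degree sequence. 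Matching the normalisations $\sum_e g(e)=\sum_{s,t}d(s,t)$ and $\sum_x\deg(x)=n\,\E\Deg$ then delivers exactly $\E_g\Ric\cdot\E d\leq\E\Deg$.

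For the rigidity, equality must hold in every intermediate step. Kantorovich equality with $f_t=d(\cdot,t)$ on every edge of every geodesic forces every neighbour of every vertex to lie on some geodesic to every other vertex, i.e.\ the distance function behaves linearly along all edges, and this combined with equality in the averaging forces $G$ to be distance-regular with constant Ollivier curvature saturating the Bonnet--Myers inequality $\diam(G)\cdot\Ric=2$. These are precisely the hypotheses of the classification of Bonnet--Myers rigid graphs by Cushing--Kamtue--Koolen--Liu--M\"unch--Peyerimhoff, which identifies such graphs as Cartesian products of cocktail party graphs, Johnson graphs, halved cubes, the Schl\"afli graph and the Gosset graph. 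The reverse implication, that these Cartesian products attain equality, reduces to a factorwise verification using the fact that $\Ric$, average distance and betweenness all decompose cleanly under the Cartesian product.

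The main obstacle will be the second step: the na\"ive telescope leaves a spurious $+1$ at the target endpoint because $\Delta f_t(t)=1$, so to land exactly on $\E\Deg$ on the right-hand side one has to either symmetrise in $s$ and $t$ or use a balanced test function such as $f(z)=(d(z,t)-d(z,s))/2$ and track carefully how the boundary contributions cancel through the double sum. The rigidity argument is then a matter of tracing equality through Kantorovich duality, through the averaging over geodesics, and through the integration by parts, and translating the constraints thereby obtained into the combinatorial axioms feeding the Bonnet--Myers classification.
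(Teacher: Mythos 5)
Your proof of the inequality is essentially the paper's argument: take $f=d(\cdot,t)$ as the optimal Kantorovich potential, apply the per-edge curvature bound, telescope along each geodesic, and average against the betweenness weights using $\sum_e g(e)=\E d$. One remark that dissolves your ``main obstacle'': the curvature in this statement is defined through the \emph{unnormalised} Laplacian $\Delta f(x)=\sum_{y\sim x}(f(y)-f(x))$, so there is no spurious boundary term. Indeed $\Delta d(s,\cdot)(s)=\Deg(s)$ is exactly the degree, and $\sum_{y}\Delta h(y)=0$ for every function $h$, so summing the endpoint expression over all source--target pairs yields exactly $|V|\sum_x\Deg(x)$, i.e.\ $\E\Deg$ after normalisation; no symmetrisation and no balanced test function are needed. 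If you carry out the computation with the normalised Laplacian instead, you will prove a different inequality whose right-hand side is not $\E\Deg$.

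The rigidity part has a genuine gap. You assert that equality forces $G$ to be distance-regular with constant curvature saturating $\diam(G)\cdot\Ric=2$, and then invoke the classification of Bonnet--Myers sharp graphs. Neither assertion is justified, and both point in the wrong direction: the equality case of this theorem explicitly includes Cartesian products whose factors have \emph{different} curvatures, so constant curvature does not follow; and the target list (all Johnson graphs, halved cubes, the Schl\"afli graph, \dots) is the classification of \emph{reflective} graphs, not of Bonnet--Myers sharp graphs, which form a different family (there, for instance, only the balanced Johnson graphs $J(2n,n)$ occur). What equality actually gives is the pointwise identity $\Delta d(z,\cdot)(x)-\Delta d(z,\cdot)(y)=\Ric(x,y)$ whenever $d(z,x)<d(z,y)$. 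From this one must extract, for every edge $(x,y)$, a perfect matching between $B_1(x)\setminus B_1(y)$ and $B_1(y)\setminus B_1(x)$, show that matched edges are parallel in the sense that they induce the same partition $V_x^y$, $V^{xy}$, $V_y^x$ of the vertex set, propagate the matching along paths to obtain a well-defined involution $\phi_{xy}$, and verify that $\phi_{xy}$ is a reflection; only then can the classification of reflective graphs be quoted. This construction is the core of the rigidity proof and is entirely absent from your sketch; ``tracing equality through the duality'' does not produce it. Your converse direction is right in outline, but note that betweenness centrality does not decompose simply under Cartesian products; the clean route is to verify the pointwise equality characterisation factor by factor, using that the Ollivier curvature of a product edge equals the curvature of that edge in its factor.
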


The definitions are given in Section~\ref{sec:Setup}.
The average distance estimate is proven in Section~\ref{sec:DistanceBound}. 
The graphs in $(ii)$ have been characterized as reflective graphs, see Theorem~\ref{thm:Reflective}, and Section~\ref{sec:Reflective} for the definition of reflective graphs.
The implication $(i) \Rightarrow (ii)$ is proven in Section~\ref{sec:Rigidity}. Finally, the implication $(ii) \Rightarrow (i)$ is proven in Section~\ref{sec:Tensorization}.

\subsection{Related work}
Our estimate is a major improvement of the celebrated discrete Bonnet Myers diameter bound which gives an upper bound to the network size in terms of a lower Ricci curvature bound \cite[Proposition~23]{ollivier2009ricci}.
The key difference is that our result only requires a positive average Ricci curvature while the classical diameter bound needs a positive minimum curvature, and in practice, the average curvature of a real network is much larger than the minimum curvature.

The average curvature has also been used by Paeng in \cite{paeng2012volume} to give a diameter bound. However, Paeng only gets meaningful estimates if the curvature is at least half the maximum possible curvature.
Non-constant curvature bounds in the spectral sense and Kato bounds have been employed to estimate the spectral gap and to show finiteness of the the fundamental group \cite{munch2020spectrally}.
Distance bounds allowing some negative curvature have been shown in \cite{liu2017distance} and improved in \cite{munch2018perpetual}.

We now discuss related rigidity results.
Investigating equality in diameter bounds is a fruitful subject and has revealed connections between curvature and interesting classes of graphs \cite{liu2017rigidity,cushing2018rigidity,munch2022reflective,
kamtue2020bonnet}.
Our rigidity result is an improvement of the main result in \cite{munch2022reflective}.
\begin{theorem}[{\cite[Theorem~1.1, Theorem~4.2, and Theorem~4.3]{munch2022reflective}}]\label{thm:Reflective}
Let $G=(V,E)$ be a connected graph with $\Ric >0$. Then,
\begin{enumerate}[(a)]
\item
$
\E d  \leq \frac{\max \Deg}{ \min \Ric},
$
\item Equality holds if and only if $G$ is reflective and has constant curvature,
\item $G$ is reflective if and only if it is a Cartesian product of 
\begin{itemize}
\item Cocktail party graphs,
\item Johnson graphs,
\item Halved cubes,
\item Schläfli graph,
\item Gosset graph.
\end{itemize}
\end{enumerate} 
\end{theorem}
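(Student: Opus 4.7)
The strategy has two analytic steps followed by a rigidity analysis. First, I would establish a pointwise inequality along geodesics: for any pair $x,y$ and any shortest path $\gamma=(x_0,\dots,x_n)$ from $x=x_0$ to $y=x_n$, the triangle inequality for $W_1$ yields
\[
W_1(\mu_x,\mu_y)\;\leq\;\sum_{i=0}^{n-1}W_1(\mu_{x_i},\mu_{x_{i+1}})\;=\;d(x,y)-\sum_{e\in\gamma}\Ric(e),
\]
using the defining identity $W_1(\mu_u,\mu_v)=1-\Ric(u,v)$ on each edge. Averaging over the $\sigma_{xy}$ shortest paths from $x$ to $y$ and summing over all ordered pairs, the definition $g(e)=\sum_{x,y}\sigma_{xy}^{e}/\sigma_{xy}$ of edge betweenness reorganizes this into
\[
\sum_{e\in E}g(e)\Ric(e)\;\leq\;\sum_{x,y\in V}\bigl(d(x,y)-W_1(\mu_x,\mu_y)\bigr).
\]

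Second, the right-hand side has to be bounded by a degree expression. The natural route is Kantorovich--Rubinstein duality: for each $(x,y)$, choose a $1$-Lipschitz test function $\phi_{xy}$ and use $W_1(\mu_x,\mu_y)\geq \int \phi_{xy}\,d(\mu_x-\mu_y)$. A clean candidate is the symmetrized $\phi_{xy}(z)=\tfrac{1}{2}(d(z,y)-d(z,x))$, giving tight lower bounds involving one-step averages of the random walks $\mu_x,\mu_y$. Summing these estimates over all pairs and swapping sums (turning sums over neighbors of $x$ into sums over $x$ for each fixed endpoint $z$) should yield $\sum_{x,y}(d(x,y)-W_1(\mu_x,\mu_y))\leq |V|\sum_v \deg(v)$. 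Dividing through by $\sum_e g(e)=\sum_{x,y}d(x,y)=|V|^2\E d$ then converts this into $\E_g\Ric\cdot\E d\leq \E\Deg$. Crafting the $1$-Lipschitz family so that the bound is both valid and simultaneously tight on the cocktail-party/Johnson/halved-cube family is where I expect the main technical difficulty to lie.

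For the direction $(ii)\Rightarrow(i)$, the plan is a tensorization argument: verify equality on each base graph of the classification using its explicit distance-regular structure, then show that the equality property propagates through Cartesian products by checking that edge betweenness, Ollivier curvature, average distance, and average degree all combine compatibly with the product operation. For the converse $(i)\Rightarrow(ii)$, equality in the main inequality forces both the geodesic telescoping and the Kantorovich--Rubinstein step to be tight. Geodesic tightness imposes a strong isotropy on optimal couplings along every shortest path, while Kantorovich--Rubinstein tightness forces an essentially deterministic relationship between degrees and distance profiles. Combining these local equality conditions with Theorem~\ref{thm:Reflective}, which already classifies equality in the related diameter bound, should force $G$ to be a Cartesian product of the listed reflective graphs. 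I expect this converse direction to be the most delicate part, since one has to lift local tightness conditions to a global structural conclusion via the prior reflective classification.
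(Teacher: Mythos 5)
There is a fundamental mismatch here: the statement you were asked to prove is Theorem~\ref{thm:Reflective}, which is quoted verbatim from \cite{munch2022reflective} (Theorems~1.1, 4.2 and 4.3 there) and is used in this paper purely as external input --- the paper contains no proof of it, and none is expected. Your proposal does not address this statement at all. What you sketch is a proof of Theorem~\ref{thm:Main}, namely the weighted bound $\E_g \Ric \cdot \E d \leq \E \Deg$ together with its equality characterization, rather than the unweighted bound $\E d \leq \max\Deg / \min\Ric$ with equality if and only if $G$ is reflective with constant curvature, plus the classification of reflective graphs. Worse, your rigidity step explicitly says ``combining these local equality conditions with Theorem~\ref{thm:Reflective}\ldots''; read as a proof of Theorem~\ref{thm:Reflective} itself, the argument is circular.

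Two further remarks in case the intended target really was Theorem~\ref{thm:Main}. First, the identity $W_1(\mu_u,\mu_v)=1-\Ric(u,v)$ belongs to the normalized (random-walk) setting, whereas this paper defines $\Ric$ through the unnormalized Laplacian $\Delta f(x)=\sum_{y\sim x}(f(y)-f(x))$; with that normalization your proposed chain of inequalities does not close without rescaling by degrees, which is exactly where your ``main technical difficulty'' sits. Second, the paper's actual proof of the distance bound is far more direct than Kantorovich--Rubinstein duality with a crafted Lipschitz family: it plugs the single test function $f=d(x,\cdot)$ into the dual definition of $\Ric$ to get $\Delta f(x)-\Delta f(y)\geq \sum_{e\in p}\Ric(e)$ along each geodesic $p$, then averages and uses $\sum_y \Delta f(y)=0$ to identify the left-hand side with $\E\Deg$. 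The rigidity direction is then handled not by invoking Theorem~\ref{thm:Reflective}(b) but by a self-contained construction of reflections from perfect matchings between $B_1(x)\setminus B_1(y)$ and $B_1(y)\setminus B_1(x)$, after which only the classification in Theorem~\ref{thm:Reflective}(c) is cited.
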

For the definition of reflective graphs, we refer to Section~\ref{sec:Reflective}.
In comparison, our main theorem vastly strengthens the distance bound, and still fully characterizes equality. Surprisingly, we do not get much more graphs with equality. Indeed, the only difference is that in our estimate, equality is also attained if the factors have different curvature.


\section{Setup and notation}\label{sec:Setup}
A finite graph $G=(V,E)$ consists of a finite vertex set $V$ and a symmetric edge relation $E \subset V^2$ with empty diagonal. We write $x\sim y$ if $(x,y) \in V$. The graph distance $d:V^2 \to \N_0$ is given by
\[
d(x,y) := \inf\{n: x=x_0 \sim \ldots \sim x_n=y\}.
\]
We say $G$ is connected if $d < \infty$. The Laplacian $\Delta: \R^V \to \R^V$ is defined as
\[
\Delta f(x) := \sum_{y\sim x}(f(y)-f(x)).
\]
and the vertex degree is defined as 
\[
\Deg(x) := \Delta d(x,\cdot)(x).
\]
We now define betweenness centrality, average vertex degree, average distance, and average Ollivier Ricci curvature.
\begin{definition}[Edge betweenness centrality]
The edge betweenness centrality $g$ of an edge $e$ is defined as
\[
g(e) := \frac 1 {|V|^2}\sum_{x,y} \frac {1}{|P_{xy}|} \sum_{p \in P_{xy}} 1_{e\in p}.
\]
where $P_{xy}$ is the set of shortest paths between $x$ and $y$.
In other words, $g(e)$ is the probability that $e$ is on a uniformly random geodesic between two uniformly random vertices.
\end{definition}

\begin{definition}[Average degree and distance]
The average vertex degree $\Deg_{av}$ is given by
\[
\E\Deg := \frac 1 {|V|}\sum_{x \in V} \Deg(x)
\]
and the average distance by
\[
\E d := \frac 1 {|V|^2} \sum_{x,y \in V} d(x,y).
\]
\end{definition}

\begin{definition}[Average curvature]
The average curvature $\E_g \Ric$ with respect to the betweenness centrality $g$ is given by
\[
\E_g \Ric := \frac{\sum_{e\in E} g(e) \Ric(e)}{\sum_{e\in E} g(e)}
\]
where $\Ric$ is the Ollivier curvature given by
\[
\Ric(x,y) := \inf_{\substack{\|\nabla f\|_\infty = 1\\f(y)-f(x)=d(x,y)}} \frac{\Delta f(x) - \Delta f(y)}{d(x,y)}
\]
with $\|\nabla f\|_\infty := \sup_{x \sim y} |f(y)-f(x)|$, see \cite{munch2017ollivier}.
\end{definition}
We remark that Ollivier and Forman curvature \cite{forman2003bochner} coincide on every edge when maximizing the latter over the choice of 2-cells with given 1-skeleton \cite[Theorem~1.2]{jost2021characterizations}.

\subsection{Reflective graphs}\label{sec:Reflective}
We recall the notion of reflective graphs introduced in \cite{munch2022reflective}.
Let $G=(V,E)$ be a finite graph. Let $x\sim y$. 
We write
\[
V_x^y := \{z \in V: d(x,z) < d(y,z)\}
\]
and
\[
V^{xy} :=  \{z \in V: d(x,z) = d(y,z)\}.
\]
A map $\phi$ is called reflection from $x$ to $y$ if
\begin{enumerate}[(a)]
\item $\phi(x)=y$,
\item $\phi(z)=z$ for all $z \in V^{xy}$,
\item $\phi^2 = id$,
\item $E(V_x^y,v_y^x) = \{(z,\phi(z)):z \in V_x^y\}$,
\item $\phi$ is a graph automorphism.
\end{enumerate}
A graph $G$ is called reflective if for all $x\sim y$, there exists a reflection from $x$ to $y$.

\section{Distance bound}\label{sec:DistanceBound}
We now prove the average distance bound.
\begin{theorem}\label{thm:DistBound}
 Let $G=(V,E)$ be a finite connected graph. Then,
\[
\E_g \Ric \cdot \E d \leq \E \Deg.
\]
\end{theorem}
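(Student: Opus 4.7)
The plan is to rewrite the weighted average as a sum of $\Ric(e)$ against betweenness, swap the order of summation to reduce to a single-geodesic estimate, and then apply the Ollivier variational formula with a distance function as test function. The key combinatorial identity is $\sum_{e\in E}g(e) = \E d$, which follows by swapping summation in the definition of $g$: each geodesic $p\in P_{ab}$ contributes exactly $d(a,b)$ edges. This gives $\E_g \Ric \cdot \E d = \sum_e g(e)\Ric(e)$, and re-expanding yields
\[
\E_g \Ric \cdot \E d \;=\; \frac{1}{|V|^2}\sum_{a,b\in V}\frac{1}{|P_{ab}|}\sum_{p\in P_{ab}}\sum_{e\in p}\Ric(e),
\]
so it suffices to bound $\sum_{e\in p}\Ric(e)$ on any single geodesic $p\colon a=x_0\sim\cdots\sim x_n=b$.

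For this I would take $f = d(a,\cdot)$, which is $1$-Lipschitz on $V$ and satisfies $f(x_{i+1})-f(x_i)=1=d(x_i,x_{i+1})$ along every edge of $p$, hence is admissible in the infimum defining $\Ric(x_i,x_{i+1})$. The variational formula then gives $\Ric(x_i,x_{i+1}) \leq \Delta f(x_i) - \Delta f(x_{i+1})$, and summing along the geodesic telescopes to
\[
\sum_{e\in p}\Ric(e) \;\leq\; \Delta d(a,\cdot)(a) - \Delta d(a,\cdot)(b) \;=\; \Deg(a) - \Delta d(a,\cdot)(b),
\]
a bound depending only on the endpoints, not on $p$ or on $|P_{ab}|$. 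The $|P_{ab}|$ copies of this estimate then exactly cancel the normalizing factor $1/|P_{ab}|$.

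It remains to average over $(a,b)$. Here the clean cancellation is that $\sum_b\Delta h(b)=0$ for any $h\in \R^V$, since the sum $\sum_{(b,v)\in E}(h(v)-h(b))$ pairs each oriented edge with its reverse. Applied with $h=d(a,\cdot)$ this eliminates the Laplacian term entirely, while $\sum_b\Deg(a)=|V|\Deg(a)$; summing over $a$ as well, the right-hand side becomes $|V|^2\E\Deg$, and dividing by $|V|^2$ yields the claim. The one step requiring care is admissibility: the sign convention in the Ollivier formula forces the use of $f=d(a,\cdot)$ rather than, say, $-d(b,\cdot)$, so that $f(y)-f(x)=d(x,y)$ holds simultaneously at every edge of the geodesic. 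Once this choice is made and the telescoping identifies the correct endpoint-only bound, the global averaging identity for the Laplacian does all the remaining work.
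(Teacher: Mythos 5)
Your proof is correct and follows essentially the same route as the paper: the identity $\sum_e g(e)=\E d$, the rewriting of $\E_g\Ric\cdot\E d$ as an average of $\sum_{e\in p}\Ric(e)$ over geodesics, the test function $f=d(a,\cdot)$ in the variational formula with telescoping along the geodesic, and the vanishing of $\sum_b\Delta d(a,\cdot)(b)$ to identify the right-hand side as $\E\Deg$. The only cosmetic difference is that the paper passes through the intermediate inequality $\Ric(a,b)\,d(a,b)\geq\sum_{e\in p}\Ric(e)$, which is itself proved by your telescoping argument.
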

\begin{proof}
For all $x \neq y$, and all shortest paths $p \in P_{xy}$, we have with $f:=d(x,\cdot)$,
\begin{align}\label{eq:proofSharp}
\Delta f(x) - \Delta f(y) \geq \Ric(x,y)d(x,y) \geq \sum_{e\in p} \Ric(e).
\end{align}
Averaging over all $p \in P_{xy}$, and all $x,y \in V$ gives
\[
\E \Deg =\frac{1}{|V|^2}\sum_{x,y} \Delta d(x,\cdot)(x) - \Delta d(x,\cdot)(y) \geq \frac 1 {|V|^2}\sum_{x,y} \frac 1 {|P_{xy}|} \sum_{e \in P_{xy}} \kappa(e) = \sum_{e \in E} g(e)\Ric(e) = \E_g \Ric \cdot \sum_{e \in E} g(e).
\]
On the other hand,
\[
\sum_e g(e) =  \frac 1 {|V|^2}\sum_{x,y} \frac {1}{|P_{xy}|} \sum_{p \in P_{xy}} \sum_{e \in E}1_{e\in p} = \frac 1 {|V|^2}\sum_{x,y} \frac {1}{|P_{xy}|} \sum_{p \in P_{xy}} d(x,y) = \E d.
\]
Combining implies the claim of the theorem immediately.  
\end{proof}

We now give a first characterization of equality in the distance bound, exploiting that the only inequality in the proof above is \eqref{eq:proofSharp}.

\begin{lemma}\label{lem:AnalyticSharpnessChar}
Let $G=(V,E)$ be a finite connected graph. T.f.a.e.:
\begin{enumerate}[(i)]
\item $\E_g \Ric \cdot \E d = \E \Deg$,
\item $\Delta d(z,\cdot)(x) - \Delta d(z,\cdot)(y) = \Ric(x,y) $ whenever $d(z,x)<d(z,y)$.
\end{enumerate}
\end{lemma}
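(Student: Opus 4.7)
The plan is to establish the equivalence by inspecting when each step in the chain of inequalities in the proof of Theorem~\ref{thm:DistBound} is tight. Every step there is of the form $\geq$, so (i) forces pointwise equality at every step, and conversely, a pointwise identity at every step can be averaged back to recover (i).

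For $(i) \Rightarrow (ii)$, I would start by noting that Theorem~\ref{thm:DistBound} is obtained by summing the pointwise chain
\[\Delta d(z,\cdot)(z) - \Delta d(z,\cdot)(y) \geq \Ric(z,y) d(z,y) \geq \sum_{e \in p}\Ric(e)\]
over $p \in P_{zy}$ and $(z,y) \in V^2$ with non-negative weights. Thus (i) implies that for every $z \neq y$ and every $p \in P_{zy}$,
\[\Delta d(z,\cdot)(z) - \Delta d(z,\cdot)(y) = \Ric(z,y) d(z,y) = \sum_{e \in p}\Ric(e).\]
Now fix $z$ and an edge $x \sim y$ with $d(z,x) < d(z,y)$, so $d(z,y) = d(z,x) + 1$. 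Pick any shortest path $P$ from $z$ to $x$; then $\tilde P := P \cup \{(x,y)\}$ is a shortest path from $z$ to $y$. Applying the extracted pointwise equality to $(z, x)$ with $P$ and to $(z, y)$ with $\tilde P$ and subtracting yields
\[\Delta d(z, \cdot)(x) - \Delta d(z, \cdot)(y) = \Ric(x,y),\]
which is (ii).

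For $(ii) \Rightarrow (i)$, I would fix $z \neq y$ and an arbitrary shortest path $z = z_0 \sim z_1 \sim \cdots \sim z_k = y$. Since $d(z, z_i) = i < i+1 = d(z, z_{i+1})$, hypothesis (ii) applies to each edge $\{z_i, z_{i+1}\}$ and gives $\Delta d(z,\cdot)(z_i) - \Delta d(z,\cdot)(z_{i+1}) = \Ric(z_i, z_{i+1})$. Telescoping,
\[\Delta d(z,\cdot)(z) - \Delta d(z,\cdot)(y) = \sum_{e \in p}\Ric(e).\]
Substituting this into the proof of Theorem~\ref{thm:DistBound} turns the chain of $\geq$ into equalities, and averaging recovers $\E\Deg = \E_g \Ric \cdot \E d$.

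The main subtlety, such as it is, lies in $(i) \Rightarrow (ii)$: one must recognise that the single-edge identity in (ii) is a \emph{difference} of the pointwise equality cases for two pairs whose distances from $z$ differ by one, and exploit that a shortest path to $y$ can be obtained from one to $x$ by appending the edge $\{x,y\}$. The middle quantity $\Ric(z,y) d(z,y)$ then drops out on subtracting. Everything else is routine bookkeeping.
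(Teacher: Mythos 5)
Your proof is correct and takes essentially the same route as the paper: both directions reduce (i) to pointwise equality in the chain \eqref{eq:proofSharp} (valid because all averaging weights are positive) and then pass between the path-level and edge-level identities along geodesics. The only cosmetic difference is that the paper telescopes a single geodesic and forces termwise equality, whereas you subtract the equalities for $(z,x)$ and $(z,y)$ along nested geodesics; both are valid.
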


\begin{proof}
Assertion $(i)$ is clearly equivalent to equality in \eqref{eq:proofSharp} for all $x,y$ and all shortest paths $p$ from $x$ to $y$.
With $p=(x=x_0\sim \ldots \sim x_n=y)$, we have
\[
\Delta f(x) - \Delta f(y) = \sum_{k=1}^n \Delta f(x_k) - \Delta f(x_{k-1}) = \sum_{k=1}^n \Ric(x_k,x_{k-1}).
\]
Particularly, $(i)$ is equivalent to 
\[
\Delta f(x_k) - \Delta f(x_{k-1}) = \Ric(x_k,x_{k-1})
\]
whenever $d(x,x_k)>d(x,x_{k-1})$.
Renaming $x$ to $z$, and $x_k$ to $y$, and $x_{k-1}$ to $x$ proves equivalence of $(i)$ and $(ii)$ finishing the proof.
\end{proof}

\section{Rigidity}\label{sec:Rigidity}

This section is devoted to prove that equality in the main estimate implies that the graph is reflective (for the definition, see Section~\ref{sec:Reflective}). We thus always assume in this section that
\begin{align}\label{eq:MainEstimateSharp}
\E_g \Ric \cdot \E d = \E \Deg.
\end{align}

\begin{lemma}[Main lemma]\label{lem:EqualityImpliesReflective}
Let $G=(V,E)$ be a finite connected graph. Assume
\[\E_g \Ric \cdot \E d = \E \Deg.\]
Then, $G$ is reflective.
\end{lemma}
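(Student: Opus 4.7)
The plan is to fix an arbitrary edge $x \sim y$ and construct a reflection from $x$ to $y$ directly from the sharpness characterization of Lemma~\ref{lem:AnalyticSharpnessChar}. By that lemma, the hypothesis $\E_g \Ric \cdot \E d = \E \Deg$ forces $\Delta d(z,\cdot)(x) - \Delta d(z,\cdot)(y) = \Ric(x,y)$ for every vertex $z \in V_x^y$; equivalently, the function $f = d(z,\cdot)$ is a minimizer in the variational definition of $\Ric(x,y)$ for every such $z$. This optimality, shared by an entire family of $1$-Lipschitz functions indexed by $V_x^y$, is the rigidity engine behind the construction.

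First, I would expand the defect
\[
\Delta f(x) - \Delta f(y) = \sum_{x' \sim x}(f(x')-f(x)) + \sum_{y' \sim y}(f(y)-f(y'))
\]
for $f = d(z,\cdot)$ and combine the equality $\Delta f(x) - \Delta f(y) = \Ric(x,y)$ with the $1$-Lipschitz bounds $|f(x') - f(x)| \le 1$ and $|f(y) - f(y')| \le 1$ to extract rigid local constraints on the distance profile of $N(x) \cup N(y)$ viewed from $z$. Varying $z$ over $V_x^y$ and exploiting the resulting system of equalities, one should obtain a canonical bijection between $N(x)$ and $N(y)$ that swaps $y$ and $x$, fixes the common neighbors $N(x) \cap N(y) \subset V^{xy}$, and matches each $x' \in N(x) \cap V_x^y$ with a unique $y' \in N(y) \cap V_y^x$ whose distance to every vertex of $V^{xy}$ agrees with that of $x'$.

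Next, I would extend this local matching to a global map $\phi \colon V \to V$ by declaring $\phi$ to be the identity on $V^{xy}$ and defining $\phi(z) \in V_y^x$ for $z \in V_x^y$ to be the unique vertex $z'$ satisfying $d(z',w) = d(z,w)$ for every $w \in V^{xy}$. Existence and uniqueness of this mirror partner, together with the verification that $\phi$ is an involution, fixes the axis $V^{xy}$, restricts to a bijection between $V_x^y$ and $V_y^x$, sends cross edges between $V_x^y$ and $V_y^x$ exactly to pairs of the form $\{z,\phi(z)\}$, and is a graph automorphism, forms the technical core of the argument.

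The main obstacle I anticipate is precisely this global well-definedness and the automorphism check: different shortest paths from $x$ into $V_x^y$ must lead to a single consistent mirror $\phi(z)$, and edges within $V_x^y$ must be carried to edges within $V_y^x$. To enforce consistency I would invoke Lemma~\ref{lem:AnalyticSharpnessChar} simultaneously at several edges lying along competing shortest paths; since each such application produces another distance-function optimizer, the pairings forced by different witness vertices $z$ must coincide. Once $\phi$ is confirmed as a graph automorphism, the five conditions defining a reflection from $x$ to $y$ are satisfied, and the lemma follows.
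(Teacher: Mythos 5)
Your overall skeleton --- extract local rigidity from Lemma~\ref{lem:AnalyticSharpnessChar}, build a matching near the edge $x\sim y$, extend it to a global involution, and verify the five reflection axioms --- is the same as the paper's. But the central construction is broken. You define $\phi(z)$, for $z\in V_x^y$, as the unique vertex $z'\in V_y^x$ with $d(z',w)=d(z,w)$ for all $w\in V^{xy}$, and this object is neither unique nor the right one. Concretely, take the octahedron $K_{3\times 2}$ (a cocktail party graph, hence an equality case): for an edge $x\sim y$ one has $V_x^y=\{x,\bar y\}$, $V_y^x=\{y,\bar x\}$, $V^{xy}=\{z,\bar z\}$, where bars denote antipodal (non-adjacent) partners. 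Every vertex of $V_y^x$ is at distance $1$ from every vertex of $V^{xy}$, so the distance profile on the axis cannot distinguish the correct image $\bar x$ of $\bar y$ from the wrong candidate $y$. Worse, $V^{xy}$ may be empty (e.g.\ $C_4=K_{2\times 2}$), in which case your defining condition is vacuous and every vertex qualifies. The paper avoids this by using a much finer invariant: two edges are \emph{parallel} when they induce the same partition of the entire vertex set, i.e.\ $V_{x'}^{y'}=V_x^y$ and $V_{y'}^{x'}=V_y^x$. Uniqueness of the parallel partner at a given vertex is then immediate, since $y''\in V_{y'}^{x'}$ forces $d(y'',y')<d(y'',x')=1$, hence $y''=y'$.

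The second gap is that your consistency argument --- ``invoke Lemma~\ref{lem:AnalyticSharpnessChar} simultaneously at several edges along competing shortest paths \dots the pairings forced by different witness vertices must coincide'' --- is exactly where the work lies, and it is asserted rather than proved. The paper's mechanism is concrete: expanding $\Delta f(x)-\Delta f(y)$ for $f=d(z,\cdot)$ over a perfect matching $\psi_{xy}$ between $B_1(x)\setminus B_1(y)$ and $B_1(y)\setminus B_1(x)$ shows each matched pair is parallel to $(x,y)$ (Lemma~\ref{lem:psiParallel}); since parallelism is an equivalence relation, transporting $(x,y)$ step by step along \emph{any} path to $x'$ produces an edge at $x'$ parallel to $(x,y)$, and the uniqueness observation above makes the result path-independent (Lemma~\ref{lem:parallelBijection}). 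Note also that the existence of the matching $\psi_{xy}$ itself is not a formal consequence of the Lipschitz expansion you sketch: it requires first deriving $\Ric(x,y)=|B_1(x)\cap B_1(y)|$ from the cases $z=x$ and $z=y$ and then appealing to a transport lemma from \cite{munch2022reflective}. So the proposal has the right shape but is missing the two ideas that make it work: the parallelism invariant in place of distances to the axis, and the matching lemma that seeds the transport.
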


The proof of the reflectiveness is structured as follows
\begin{itemize}
\item Determine local structure,
\item Define parallel edges,
\item Prove existence and uniqueness of a parallel edge starting at given vertex,
\item Define $\phi$ via parallel edges,
\item Prove that $\phi$ is a reflection.
\end{itemize}

We now determine the local structure around an edge $x\sim y$.
\begin{lemma}
Assume $\E_g \Ric \cdot \E d = \E \Deg.$ Let $x\sim y$. Then, there is a perfect matching $\psi_{xy}$ from $B_1(x)\setminus B_1(y)$ and $B_1(y) \setminus B_1(x)$, i.e. $\psi$ is bijective and $z \sim \psi_{xy}(z)$ for all $z$ in the domain.
\end{lemma}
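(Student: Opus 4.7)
The plan is to exploit the sharpness identity from Lemma~\ref{lem:AnalyticSharpnessChar} twice: first with the canonical test function $f=d(x,\cdot)$ to extract numerical identities about the local structure at the edge $x\sim y$, and then, by contradiction, with a refined test function to force Hall's condition on the bipartite graph between $A:=B_1(x)\setminus B_1(y)$ and $B:=B_1(y)\setminus B_1(x)$.

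First I would apply Lemma~\ref{lem:AnalyticSharpnessChar}(ii) with $z=x$, which satisfies $d(z,x)=0<1=d(z,y)$. Computing $\Delta d(x,\cdot)$ at $x$ and at $y$ directly gives $\Ric(x,y)=|B_1(x)|-|B|$, and applying the same with $z=y$ gives $\Ric(x,y)=|B_1(y)|-|A|$. Adding and subtracting these identities yields $\Deg(x)=\Deg(y)$, $|A|=|B|$, and $\Ric(x,y)=|B_1(x)\cap B_1(y)|=2+c$, where $c:=|B_1(x)\cap B_1(y)\setminus\{x,y\}|$ is the number of common neighbors of $x$ and $y$.

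The core of the argument is a Hall-type matching step. I would verify Hall's condition for the bipartite graph on $(A,B)$ whose edges are the pairs $a\sim b$. Suppose for contradiction that some $S\subseteq A$ has $|T|<|S|$, where $T:=\{b\in B:b\sim a\text{ for some }a\in S\}$. Define a candidate function $f\colon V\to\R$ with $f(x)=0$ and $f(y)=1$ by setting $f\equiv 0$ on $S$, $f\equiv 1$ on $(A\setminus S)\cup T\cup\bigl(B_1(x)\cap B_1(y)\setminus\{x,y\}\bigr)$, and $f\equiv 2$ on $B\setminus T$, and extend to the rest of $V$ via $f(v):=\max_{u}(f(u)-d(u,v))$. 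The only pair $(u,v)$ in the initial set where $f$ differs by more than $1$ is $(a,b)\in S\times(B\setminus T)$, and by definition of $T$ any such pair is non-adjacent, so $d(a,b)\geq 2$. Hence the values are pairwise compatible with the graph distance on the initial set, and the McShane-type extension is $1$-Lipschitz. A direct computation of the Laplacians then yields
\[
\Delta f(x)-\Delta f(y)=2+c+|T|-|S|<\Ric(x,y),
\]
contradicting the infimum inequality $\Ric(x,y)\leq\Delta f(x)-\Delta f(y)$. Therefore $|T|\geq|S|$ holds for every $S\subseteq A$, and combined with $|A|=|B|$, Hall's marriage theorem delivers the required perfect matching $\psi_{xy}\colon A\to B$.

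The main obstacle will be engineering the refined test function so that its $1$-Lipschitz extension exists while still achieving $\Delta f(x)-\Delta f(y)<\Ric(x,y)$. The compressed value ranges $\{0,1\}$ on $A$ and $\{1,2\}$ on $B$ are chosen precisely so that within-part edges and edges to common neighbors automatically respect the Lipschitz bound, reducing the entire Lipschitz check to the $S\times(B\setminus T)$ pairs, which are exactly the ones ruled out by the definition of $T$.
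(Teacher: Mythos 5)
Your proposal is correct. The first half coincides with the paper's argument: plugging $z=x$ and $z=y$ into the sharpness identity of Lemma~\ref{lem:AnalyticSharpnessChar} to obtain $\Deg(x)=\Deg(y)$, $|A|=|B|$, and $\Ric(x,y)=|B_1(x)\cap B_1(y)|$. Where you diverge is in the matching step: the paper simply cites \cite[Lemma~3.3]{munch2022reflective} to pass from $\Ric(x,y)=|B_1(x)\cap B_1(y)|$ to the perfect matching, whereas you prove this implication from scratch via Hall's theorem, using the test function that is $0$ on $\{x\}\cup S$, $1$ on $\{y\}\cup(A\setminus S)\cup T\cup(B_1(x)\cap B_1(y)\setminus\{x,y\})$, $2$ on $B\setminus T$, extended by McShane. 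I checked the details: the only value gaps of size $2$ occur between $\{x\}\cup S$ and $B\setminus T$, and these pairs are at distance at least $2$ (for $x$ because $B\cap B_1(x)=\emptyset$, for $S$ by definition of $T$), so the extension is $1$-Lipschitz with $f(y)-f(x)=1$; the Laplacian computation gives $\Delta f(x)-\Delta f(y)=2+c+|T|-|S|<2+c=\Ric(x,y)$, contradicting the definition of $\Ric$ as an infimum. This is a valid, self-contained replacement for the cited lemma (and is essentially the standard proof that maximal Ollivier curvature forces a perfect matching between the exclusive neighborhoods); the paper's route is shorter but outsources exactly this combinatorial content to the reference.
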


\begin{proof}
By Lemma~\ref{lem:AnalyticSharpnessChar}, we have
\begin{align}
\Delta d(z,\cdot)(x) -  \Delta d(z,\cdot)(y) = \Ric(x,y) \label{eq:Deltadz}
\end{align}
for all $z$ with $d(y,z)> d(x,z)$. 
Plugging in $z=x$, we get
\[
\Ric(x,y) = \Deg(x) - \Deg(y) + |B_1(x) \cap B_1(y)|.
\]
Combining with the case $z=y$, we get $\Deg(x)=\Deg(y)$ and

\[
\Ric(x,y) = |B_1(x) \cap B_1(y)|.
\]
By \cite[Lemma~3.3]{munch2022reflective}, this implies that there is a perfect matching between $B_1(x)\setminus B_1(y)$ and $B_1(y) \setminus B_1(x)$, denoted by $\psi_{xy}$. This finishes the proof.
\end{proof}

We now define parallel edges. Recall that $V_x^y=\{z: d(z,x)<d(z,y)\}$ and $V^{xy} = \{z: d(x,z)=d(y,z)\}$.

\begin{definition}[Parallel edges]
Let $x\sim y$ and $x'\sim y'$. We say $(x,y) \parallel (x',y')$ if $V_x^y = V_{x'}^{y'}$ and $V_y^x = V_{y'}^{x'}$.
\end{definition}

We now show that the perfect matching $\psi_{xy}$ provides parallel edges.

\begin{lemma}\label{lem:psiParallel}
Assume $\E_g \Ric \cdot \E d = \E \Deg$.
Let $x\sim y$ and $x' \in B_1(x) \setminus B_1(y)$ and let $y' = \psi_{xy}(x')$.
Then, $V_x^y = V_{x'}^{y'}$ and $V_y^x = V_{y'}^{x'}$.
In other words, $(x,y) \parallel (x',y')$.
\end{lemma}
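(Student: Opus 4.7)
The approach is to exploit the equality case of Lemma~\ref{lem:AnalyticSharpnessChar} at the edge $(x,y)$ and show that $\psi_{xy}$ shifts $f := d(z,\cdot)$ by exactly $+1$ on every matched pair whenever $z \in V_x^y$; the parallelism then follows edge by edge.

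For $z \in V_x^y$, so that $f(y)=f(x)+1$, I would decompose $N(x) \cup N(y)$ as $\{x,y\} \sqcup A \sqcup B \sqcup C$ with $A = B_1(x) \setminus B_1(y)$, $B = B_1(y) \setminus B_1(x)$, and $C$ the common neighbors distinct from $x,y$. Expanding $\Delta f(x) - \Delta f(y)$ using this partition and noting that each $c \in C$ contributes exactly $f(y)-f(x) = 1$ to the difference, the identity from Lemma~\ref{lem:AnalyticSharpnessChar}(ii) combined with $\Ric(x,y) = |C|+2$ (established in the previous lemma) collapses to the clean balance
\[
\sum_{a\in A}\bigl(f(a)+1\bigr) \;=\; \sum_{b\in B} f(b).
\]
Pairing the two sides via $\psi_{xy}$ rewrites this as $\sum_{a \in A}\bigl[f(a)+1-f(\psi_{xy}(a))\bigr] = 0$; each summand lies in $\{0,1,2\}$ because $a \sim \psi_{xy}(a)$ and $f$ is $1$-Lipschitz, so every summand must vanish, giving $f(\psi_{xy}(a))=f(a)+1$ for every $a \in A$. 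Specialising to $a = x'$ yields $d(z,y') = d(z,x')+1$, so $z \in V_{x'}^{y'}$, proving $V_x^y \subseteq V_{x'}^{y'}$. Running the same derivation with $x,y$ swapped (and using $\psi_{xy}^{-1}$ as a matching from $B$ to $A$) delivers $V_y^x \subseteq V_{y'}^{x'}$.

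For the reverse inclusions I would apply the identical analysis to the edge $(x',y')$ with its own matching $\psi_{x'y'}$ between $A' := B_1(x')\setminus B_1(y')$ and $B' := B_1(y')\setminus B_1(x')$. Since $x \in A'$, $y \in B'$ and $x \sim y$, the edge $\{x,y\}$ lies in the bipartite adjacency graph; it therefore suffices to exhibit a matching with $\psi_{x'y'}(x) = y$, for then the transport identity will give $f(y)=f(x)+1$ for every $z \in V_{x'}^{y'}$, yielding $V_{x'}^{y'} \subseteq V_x^y$. My plan is first to prove that $\psi_{x'y'}(x)$ is uniquely determined via the contradiction $d(b_1,b_2) = d(b_1,x) - 1 = 0$, obtained by taking $z := b_1 \in B' \subseteq V_{y'}^{x'}$ and applying the $V_{y'}^{x'}$-version of the transport identity (which decreases $f$ by $1$ across each matched pair) to two hypothetical matchings, and then to identify this unique image with $y$ by comparing transports against $\psi_{xy}$.

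The main obstacle is this last identification step $\psi_{x'y'}(x) = y$; everything else is a mechanical unpacking of the analytic identity together with the $1$-Lipschitz property of distance functions. Once the identification is secured, the four inclusions above become equalities and the lemma follows.
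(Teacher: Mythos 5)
Your first half is correct and is essentially the paper's argument: expanding $\Delta f(x)-\Delta f(y)$ for $f=d(z,\cdot)$ with $z\in V_x^y$, using $\Ric(x,y)=|C|+2=|B_1(x)\cap B_1(y)|$ from the preceding lemma, and observing that the resulting sum $\sum_{a}\bigl(f(a)+1-f(\psi_{xy}(a))\bigr)$ has non-negative terms (by the $1$-Lipschitz property) and vanishes, so each term vanishes. This gives the two inclusions $V_x^y\subseteq V_{x'}^{y'}$ and $V_y^x\subseteq V_{y'}^{x'}$ exactly as in the paper.

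The second half has a genuine gap, and you flag it yourself: you never establish $\psi_{x'y'}(x)=y$, and your sketched route (uniqueness of the matched partner via two hypothetical matchings, then an unspecified ``comparison of transports'') does not close it. Note in particular that you cannot simply \emph{choose} a perfect matching containing the edge $(x,y)$ --- the previous lemma only supplies \emph{some} perfect matching, and whether $(x,y)$ can be an edge of one is precisely what needs proof. The fix is short and uses only the inclusion you have already proved. Set $y'':=\psi_{x'y'}(x)$ (well defined since $x\in B_1(x')\setminus B_1(y')$, because $y'\notin B_1(x)$). Applying your first-half inclusion to the edge $(x',y')$ and the matched pair $(x,y'')$ gives $V_{y'}^{x'}\subseteq V_{y''}^{x}$, hence the chain $V_y^x\subseteq V_{y'}^{x'}\subseteq V_{y''}^{x}$. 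Now evaluate at $z=y\in V_y^x$: membership in $V_{y''}^{x}$ means $d(y,y'')<d(y,x)=1$, forcing $y''=y$. The chain then reads $V_y^x\subseteq V_{y'}^{x'}\subseteq V_y^x$, so all inclusions are equalities; the symmetric argument gives $V_x^y=V_{x'}^{y'}$. In other words, you do not need to identify $\psi_{x'y'}(x)$ in advance --- you name it, run the inclusion a second time, and let the point $z=y$ force the identification and collapse the chain simultaneously.
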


\begin{proof}
We first prove $"\subseteq"$.
Let $z \in V_x^y$. Then, $d(y,z)>d(x,z)$, and thus with $f=d(z,\cdot)$,
\[
\Delta f(x) - \Delta f(y) = \Ric(x,y).
\] 
We have
\[
\Delta f(x) - \Delta f(y) = |B_1(x) \cap B_1(y)| + \sum_{\widetilde x \in B_1(x)\setminus B_1(y)} f(\widetilde x) +1 - f(\psi(\widetilde x)) 
\]
As $\|\nabla f\|_\infty = 1$, the sum is non-negative, and by the equation above, it must be zero. Hence, every part of the sum is zero implying $f(\psi(x')) = f(x') + 1$, meaning $z \in V_{x'}^{y'}$.
This shows $V_x^y \subseteq V_{x'}^{y'}$. Similarly, $V_y^x \subseteq V_{y'}^{x'}$.

We now prove equality. Let $y'' := \psi_{x'y'}(x)$. Then,
$V_y^x \subseteq V_{y'}^{x'} \subseteq V_{y''}^{x}$.
Particularly, $y \in V_{y''}^{x}$ implying $d(y,y'') <d(y,x)=1$ and hence $y=y''$. By $V_y^x \subseteq V_{y'}^{x'} \subseteq V_{y''}^{x}$, this proves equality and in particular, $V_y^x = V_{y'}^{x'}$. 
Similarly, $V_x^y = V_{x'}^{y'}$. This finishes the proof.
\end{proof}

We now prove existence and uniqueness of a parallel edge starting at a given vertex. By this, we define $\phi_{xy}$ on $V_x^y$.

\begin{lemma}\label{lem:parallelBijection}
Assume $\E_g \Ric \cdot \E d = \E \Deg$.
Let $x\sim y$ and $x' \in V_x^y$.
Then, there is a unique $y' \sim x'$ with $(x,y) \parallel (x',y')$.
We denote $y'$ by $\phi_{xy}(x')$.
\end{lemma}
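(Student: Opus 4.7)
The plan is to handle uniqueness directly from the definition of parallel edges (no use of the main assumption), and existence by induction along a shortest path from $x$ to $x'$, using Lemma~\ref{lem:psiParallel} at each step.

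For uniqueness, suppose $y_1, y_2 \sim x'$ both satisfy $(x,y) \parallel (x',y_j)$. Then by definition $V_{y_1}^{x'} = V_y^x = V_{y_2}^{x'}$. Since $y_2 \sim x'$ gives $d(y_2,y_2)=0 < 1 = d(y_2,x')$, we have $y_2 \in V_{y_2}^{x'} = V_{y_1}^{x'}$, so $d(y_1,y_2) < d(y_1,x') = 1$, forcing $y_1 = y_2$.

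For existence, fix a shortest path $x = x_0 \sim x_1 \sim \ldots \sim x_k = x'$ and build, by induction on $i$, a vertex $y_i \sim x_i$ with $(x,y) \parallel (x_i,y_i)$, starting from $y_0 := y$. A short triangle-inequality argument gives $d(y,x') = d(x,x')+1$ (since $y\sim x$ and $x' \in V_x^y$), and then $d(y,x_i) \geq d(y,x') - d(x_i,x') = i+1 > i = d(x,x_i)$, so every $x_i$ lies in $V_x^y$. For the induction step, the hypothesis $(x,y)\parallel(x_i,y_i)$ means $V_{x_i}^{y_i} = V_x^y$, hence $x_{i+1}\in V_{x_i}^{y_i}$, which in particular gives $d(y_i,x_{i+1}) > 1$, i.e.\ $x_{i+1}\in B_1(x_i)\setminus B_1(y_i)$. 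Lemma~\ref{lem:psiParallel} applied to the edge $x_i\sim y_i$ then produces $y_{i+1} := \psi_{x_iy_i}(x_{i+1}) \sim x_{i+1}$ with $(x_i,y_i)\parallel(x_{i+1},y_{i+1})$. Transitivity of $\parallel$ is immediate from its definition through equality of the two sets $V_\cdot^\cdot$ and $V^\cdot_\cdot$, so $(x,y)\parallel(x_{i+1},y_{i+1})$. At $i=k$ we take $\phi_{xy}(x') := y_k$.

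The only nontrivial content is Lemma~\ref{lem:psiParallel}, which is already available; everything else is bookkeeping. The one step to watch is confirming that the induction hypothesis automatically legalises the next application of \ref{lem:psiParallel}, but this is exactly what the parallelism condition $V_{x_i}^{y_i}=V_x^y$ delivers, so no new curvature or sharpness argument is needed beyond what has already been used.
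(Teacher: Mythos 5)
Your proof is correct and follows essentially the same route as the paper's: uniqueness via the observation that a second parallel partner $y''$ would lie in $V_{y'}^{x'}$ and hence coincide with $y'$, and existence by propagating $y$ along a path using $\psi_{x_iy_i}$ and the transitivity of $\parallel$. You are in fact slightly more careful than the paper, since by taking a \emph{shortest} path you can verify that $x_{i+1}\in B_1(x_i)\setminus B_1(y_i)$, i.e.\ that $\psi_{x_iy_i}$ is actually applicable at each step --- a point the paper's proof leaves implicit.
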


\begin{proof}
We will use that "$\parallel$" is an equivalence relation.
We show existence first. Let $(x=x_0\sim \ldots,\sim x_n=x')$ be a path. we inductively define $y_0:=y$ and $y_{k+1} := \psi_{x_ky_k}(x_{k+1})$. By Lemma~\ref{lem:psiParallel}, we have $(x_k,y_k) \parallel (x_{k+1},y_{k+1})$, and thus, $(x,y) \parallel (x',y_{n})$ showing existence.
We now show uniqueness. Assume $(x,y) \parallel (x',y')$ and $(x,y) \parallel (x',y'')$. Then, $(x',y')\parallel (x',y'')$.
Particularly, $y'' \in V_{y'}^{x'}$ implying $d(y'',y') < d(y'',x') = 1$ and thus, $y''=y'$. This shows uniqueness and finishes the proof.
\end{proof}

The map $\phi_{xy}$ from the above lemma is defined on $V_x^y$.
We now extend $\phi_{xy}$ to a map on $V$.
\begin{definition}
Let $x\sim y$. We define $\phi_{xy}:V\to V$ via
\[
\phi_{xy}(z) := \begin{cases}
z&: z \in V^{xy}, \\
\phi_{xy}(z) &: z \in V_x^y,\\
\phi_{yx}(z) &: z \in V_y^x. 
\end{cases}
\]
\end{definition}

We finally prove reflectiveness.

\begin{proof}[Proof of the main lemma]
Let $x\sim y$. We  will show that $\phi_{xy}$ is a reflection.

\begin{enumerate}[(a)]
\item The property $\phi_{xy}(x)=y$ is clear as $(x,y) \parallel (x,y)$.
\item The property $\phi_{xy}(z) = z$ for all $z \in V^{xy}$ is clear by definition.

\item  
We show that $\phi_{xy}^2 = id$. We aim to show $\phi_{xy}^2(z)=z$ for all $z \in V$.
First let $z \in V_x^y$. We notice $(\phi_{xy}(z),z) \parallel (y,x)$ and $(\phi_{xy}(z),\phi_{yx}(\phi_{xy})(z)) \parallel (y,x)$.
By uniqueness of parallel edges from Lemma~\ref{lem:parallelBijection}, this implies $z= \phi^2_{xy}(z)$. The case $z \in V_y^x$ works similarly, and the case $z \in V^{xy}$ is trivial. This shows $\phi_{xy}^2 = id$.

\item 
We now show $E(V_x^y,V_y^x) = \{(z,\phi_{xy}(z)):z\in V_x^y\}$.
We notice that "$\supseteq$" is clear as $z \sim \phi_{xy}(z)$ for all $z \in V_x^y$.
Now suppose there is $(x', y'') \in E(V_x^y,V_y^x)$ with $y'' \neq \phi_{xy}(x')=:y'$, for which we will find a contradiction.
Then, $y'' \in V_y^x=V_{y'}^{x'}$. Hence, $d(y'',y') < d(y'',x')=1$ implying $y''=y'$. This shows $E(V_x^y,V_y^x) = \{(z,\phi_{xy}(z)):z\in V_x^y\}$.
\item  
We finally show that $\phi:=\phi_{xy}$ is a graph automorphism. By $\phi^2= id$, it suffices to prove $\phi(v)\sim \phi(w)$ whenever $v \sim w$.
As $\phi$ is bijective, it suffiecient to show $d(\phi(v),\phi(w))<2$
Let $v\sim w$. We proceed by case distinction. 
\begin{description}
\item[Case 1:] $v \in V_x^y$ and $w \in V_y^x$. Then, 
$w= \phi(v)$ implying $\phi(v)\sim \phi(w)$.

\item[Case 2:] $v,w \in V_x^y$. 
Then, $\phi(w) \in V_y^x=V_{\phi(v)}^v$ giving $d(\phi(v),\phi(w))< d(v,\phi(w)) \leq 2$.

\item[Case 3:] $v,w \in V^{xy}$. Then clearly, $\phi(v)=v \sim w = \phi(w)$.

\item[Case 4:] $v\in V_x^y$ and $w \in V^{xy}$. Then, $w \in V^{v\phi(v)}$ implying $d(\phi(w),\phi(v)) = d(w,\phi(v)) = d(w,v)=1$.
\end{description}
The remaining cases are analogous, and thus we have shown that $\phi_{xy}$ is a graph automorphism.
\end{enumerate}

In summary, we have proven that $\phi$ is a reflection, finishing the proof.
\end{proof}

\section{Tensorization}\label{sec:Tensorization}

In this section, we aim to show the implication $(ii) \Rightarrow (i)$ in Theorem~\ref{thm:Main}.
We first show that equality in our distance estimate is compatible with Cartesian products.

\begin{lemma}\label{lem:Cartesian}
Let $G_1,G_2$ be finite connected graphs satisfying $\E_g \Ric \cdot \E d = \E \Deg$. Then, $G_1 \times G_2$ satisfies $\E_g \Ric \cdot \E d = \E \Deg$
\end{lemma}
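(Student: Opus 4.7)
The plan is to verify condition (ii) of Lemma~\ref{lem:AnalyticSharpnessChar} for $G := G_1 \times G_2$, which by that lemma is equivalent to the equality (i) we want. So I would fix $z = (z_1,z_2)$ together with an edge $x \sim y$ in $G$ satisfying $d(z,x) < d(z,y)$, and aim to show
\[
\Delta d(z,\cdot)(x) - \Delta d(z,\cdot)(y) = \Ric(x,y).
\]

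Every edge of $G$ lies in exactly one factor, so by symmetry I may assume $x = (x_1, x_2)$ and $y = (y_1, x_2)$ with $x_1 \sim y_1$ in $G_1$. Using the additivity $d((a_1,a_2),(b_1,b_2)) = d_1(a_1,b_1) + d_2(a_2,b_2)$ and the resulting decomposition $\Delta = \Delta_1 + \Delta_2$ of the product Laplacian, a direct computation summing over the two types of neighbours of $x$ and $y$ (the $\Delta_2$-contributions cancel because $x$ and $y$ share their second coordinate) gives
\[
\Delta d(z,\cdot)(x) - \Delta d(z,\cdot)(y) = \Delta_1 d_1(z_1,\cdot)(x_1) - \Delta_1 d_1(z_1,\cdot)(y_1).
\]
Since $d(z,x) < d(z,y)$ reduces to $d_1(z_1,x_1) < d_1(z_1,y_1)$, the assumption that $G_1$ satisfies condition (ii) of Lemma~\ref{lem:AnalyticSharpnessChar} identifies the right-hand side with $\Ric_{G_1}(x_1,y_1)$.

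The key remaining step is the tensorization identity $\Ric_G(x,y) = \Ric_{G_1}(x_1, y_1)$ for this edge in the product. The inequality ``$\leq$'' is immediate: lift a $1$-Lipschitz minimiser $g$ on $G_1$ satisfying $g(y_1)-g(x_1) = 1$ to $f(u_1,u_2) := g(u_1)$, which is admissible on $G$ and gives $\Delta f(x) - \Delta f(y) = \Delta_1 g(x_1) - \Delta_1 g(y_1)$. For ``$\geq$'', I would take any admissible $f$ on $G$, set $g(u_1) := f(u_1, x_2)$ (which is $1$-Lipschitz on $G_1$ with $g(y_1)-g(x_1)=1$), and split the Laplacians at $x$ and $y$ to obtain
\[
\Delta f(x) - \Delta f(y) = \bigl(\Delta_1 g(x_1) - \Delta_1 g(y_1)\bigr) + \sum_{w_2 \sim x_2} \bigl(1 + f(x_1, w_2) - f(y_1, w_2)\bigr),
\]
where the ``$1$'' comes from rewriting the cross term via $f(y_1,x_2)-f(x_1,x_2)=1$. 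The first bracket is $\geq \Ric_{G_1}(x_1,y_1)$ by definition, and each summand in the sum is non-negative because $(x_1, w_2) \sim (y_1, w_2)$ in $G$ and $\|\nabla f\|_\infty = 1$.

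The main obstacle is precisely this lower bound for the product Ollivier curvature: a priori, admissible test functions on $G$ that depend nontrivially on the second coordinate could push the infimum below $\Ric_{G_1}(x_1,y_1)$, and the non-negativity of the cross terms forced by the $1$-Lipschitz condition across the ``parallel'' edges $(x_1,w_2) \sim (y_1,w_2)$ is what rules this out. Chaining the displayed identities finally produces $\Delta d(z,\cdot)(x) - \Delta d(z,\cdot)(y) = \Ric(x,y)$, verifying condition (ii) of Lemma~\ref{lem:AnalyticSharpnessChar} for $G$ and hence (i), as required.
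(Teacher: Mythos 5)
Your proof is correct and follows the same skeleton as the paper's: reduce to condition (ii) of Lemma~\ref{lem:AnalyticSharpnessChar}, use the additivity $d = d_1 + d_2$ and $\Delta = \Delta_1 + \Delta_2$ so that the cross-terms cancel (the paper places the edge in the second factor, you in the first -- immaterial), and then identify $\Delta_1 d_1(z_1,\cdot)(x_1) - \Delta_1 d_1(z_1,\cdot)(y_1)$ with $\Ric_{G_1}(x_1,y_1)$ via the hypothesis on the factor. The one place you diverge is the tensorization identity $\Ric_G(x,y) = \Ric_{G_1}(x_1,y_1)$: the paper simply cites \cite[Theorem~3.1]{lin2011ricci}, whereas you prove it from the gradient formulation of $\Ric$ -- lifting an optimal test function for ``$\leq$'', and restricting to the slice $u_1 \mapsto f(u_1,x_2)$ plus non-negativity of the cross-terms $1 + f(x_1,w_2) - f(y_1,w_2)$ over the parallel edges for ``$\geq$''. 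Both directions of your argument check out. Your self-contained version is arguably preferable here: the cited Lin--Lu--Yau result is stated for the normalized (lazy random walk) curvature, where the product formula carries degree-dependent scaling factors, so invoking it for the non-normalized curvature of \cite{munch2017ollivier} used in this paper requires at least a remark; your direct computation avoids that translation entirely at the cost of a few extra lines.
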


\begin{proof}
We use the characterization from Lemma~\ref{lem:AnalyticSharpnessChar}.
Let $x,y,z \in G_1 \times G_2$ with $d(z,x)<d(z,y)$. Without obstruction, we assume $x=(x_1,x_2)$ and $y=(x_1,y_2)$, and $z=(z_1,z_2)$. 
Then, $d(z,x)<d(z,y)$ implies
$d_2(z_2,x_2)<d_2(z_2,y_2)$.
We observe that
\[
\Delta d(z,\cdot)(x) = \Delta_1 d_1(z_1,\cdot)(x_1) + \Delta_2 d_2(z_2,\cdot)(x_2),
\]
and similarly for $y$. Thus,
\begin{align*}
\Delta d(z,\cdot)(x) - \Delta d(z,\cdot)(y) = \Delta_2 d_2(z_2,\cdot)(x_2) - \Delta_2 d_2(z_2,\cdot)(y_2) = \Ric_2(x_2,y_2)
\end{align*}
as $x$ and $y$ share the first coordinate, and by Lemma~\ref{lem:AnalyticSharpnessChar} for $G_2$.
By \cite[Theorem~3.1]{lin2011ricci}, we have
\[
\Ric(x,y) = \Ric_2(x_2,y_2)
\]
showing that $G_1 \times G_2$ satisfies $(ii)$ in Lemma~\ref{lem:AnalyticSharpnessChar}.
Hence, the proof is finished.
\end{proof}

Having compatibility with Cartesian products, we can now use \cite{munch2022reflective} to prove the implication $(ii) \Rightarrow (i)$ in Theorem~\ref{thm:Main}.

\begin{lemma}\label{lem:ReflectiveImpliesEquality}
Every reflective graph $G$ satisfies 
$\E_g \Ric \cdot \E d = \E \Deg$.
\end{lemma}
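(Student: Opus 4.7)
The plan is to reduce the lemma to the atomic graphs in the classification of Theorem~\ref{thm:Reflective}, exploiting the compatibility of the distance estimate with Cartesian products that was just established in Lemma~\ref{lem:Cartesian}.

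\textbf{Step 1 (Reduction to building blocks).} By Theorem~\ref{thm:Reflective}(c), any reflective graph $G$ admits a factorization $G = G_1 \times \cdots \times G_n$ where each $G_i$ is a cocktail party graph, a Johnson graph, a halved cube, the Schl\"afli graph, or the Gosset graph. Iterating Lemma~\ref{lem:Cartesian}, the equality $\E_g \Ric \cdot \E d = \E \Deg$ for $G$ follows once it is established for every individual factor $G_i$.

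\textbf{Step 2 (Atomic case via Theorem~\ref{thm:Reflective}(b)).} Each of the five listed building blocks is distance-transitive, hence has constant vertex degree and, because any automorphism mapping one edge to another preserves Ollivier curvature, constant edge Ollivier Ricci curvature. Consequently both $\E\Deg = \Deg$ and $\E_g \Ric = \Ric$ collapse to their constant values, and the target equality for $G_i$ becomes $\Ric \cdot \E d = \Deg$, which may be rewritten as $\E d = \Deg/\Ric = \max\Deg/\min\Ric$. But $G_i$ is reflective (it is a trivial one-factor Cartesian product from the list) and has constant curvature, so Theorem~\ref{thm:Reflective}(b) provides precisely this equality.

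The main obstacle is the classical input that the five families of building blocks are distance-transitive, so that both $\Deg$ and $\Ric$ are constant across vertices and edges respectively; once this is granted, the three ingredients Lemma~\ref{lem:Cartesian}, Theorem~\ref{thm:Reflective}(b) and Theorem~\ref{thm:Reflective}(c) combine cleanly with no further computation. As an alternative approach bypassing the reduction, one could verify condition $(ii)$ of Lemma~\ref{lem:AnalyticSharpnessChar} directly in any reflective graph: pair each neighbor $u$ of $x$ with $\phi_{xy}(u) \sim y$ using that $\phi_{xy}$ is an involutive automorphism, and use the parallel-edge relation $(x,y) \parallel (u, \phi_{xy}(u))$ to deduce $d(z, \phi_{xy}(u)) = d(z,u) + 1$ whenever $z \in V_x^y$, which after telescoping yields $\Delta d(z,\cdot)(x) - \Delta d(z,\cdot)(y) = \Ric(x,y) = |B_1(x) \cap B_1(y)|$ as required.
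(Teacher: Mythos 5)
Your proof is correct and follows essentially the same route as the paper: decompose via Theorem~\ref{thm:Reflective}(c), establish the equality for each factor from Theorem~\ref{thm:Reflective}(b) using that the listed building blocks have constant curvature, and tensorize with Lemma~\ref{lem:Cartesian}. The only cosmetic difference is that the paper invokes edge-transitivity and sandwiches $\E_g \Ric \cdot \E d$ between $\min\Ric\cdot \E d=\max\Deg$ and Theorem~\ref{thm:DistBound}, whereas you cite distance-transitivity to identify the averages with their constant values directly; both versions tacitly use that the listed factors satisfy $\Ric>0$ so that Theorem~\ref{thm:Reflective} applies.
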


\begin{proof}
By Theorem~\ref{thm:Reflective}$(c)$, and as all graphs in the list of the theorem are edge transitive, we know that every reflective graph is a cartesian product of reflective graphs with constant curvature.
By Theorem~\ref{thm:Reflective}$(b)$, every factor of $G$ satisfies 
$\min \Ric \cdot \E d = \max \Deg$, and by Theorem~\ref{thm:DistBound}, this implies 
$\E_g \Ric \cdot \E d = \E \Deg$ for every factor. By Lemma~\ref{lem:Cartesian}, this implies that $G$ satisfies $\E_g \Ric \cdot \E d = \E \Deg$. This finishes the proof. 
\end{proof}

\section*{Acknowledgments}
The author wants to thank Simon Puchert for pointing out that the estimate in Theorem~\ref{thm:Reflective}$(a)$ also works with the average degree instead of the maximum degree, which gave the inspiration to also replace the minimum curvature by some average curvature. The author also wants to thank David Cushing and Norbert Peyerimhoff for useful discussions about reflective graphs during his stay at Newcastle University in July 2022.

\printbibliography

\end{document}